\newtheorem{theorem}{Theorem}
\newtheorem{conjecture}[theorem]{Conjecture}
\newtheorem{corollary}[theorem]{Corollary}
\newtheorem{definition}[theorem]{Definition}
\newtheorem{lemma}[theorem]{Lemma}
\newtheorem{remark}[theorem]{Remark}
\newenvironment{proof}[1][Proof]{\noindent\textbf{#1.} }{\ \rule{0.5em}{0.5em}}
\begin{document}

\title{{\large UNIMODALITY OF ORDINARY MULTINOMIALS AND MAXIMAL
PROBABILITIES OF CONVOLUTION POWERS OF DISCRETE UNIFORM DISTRIBUTION}}
\author{Hac\`{e}ne Belbachir}
\maketitle

\begin{abstract}
\noindent We establish the unimodality and the asymptotic strong unimodality
of the ordinary multinomials and give their smallest mode leading to the
expression of the maximal probability of convolution powers of the discrete
uniform distribution. We conclude giving the generating functions of the
sequence of generalized ordinary multinomials and for\ an extension of the
sequence of maximal probabilities for convolution power of discrete uniform
distribution.
\end{abstract}

\noindent Since\renewcommand{\thefootnote}{} \footnotetext{%
\noindent\ \ \ \ \ \ \ \ \ \ \ \ \ \
\par
\noindent \noindent \noindent \textbf{MSC 2000 Subject Classification.}
Primary 60C05, 05A10; secondery 11B39, 11B65
\par
\noindent \textbf{Keywords and phrases.} Unimodality; Ordinary multinomials;
Discrete uniform distribution; Log-concavity; Convolution; Generating
function.
\par
\noindent \textbf{Acknowledgement of support.} This research is partially
supported by LAID3 laboratory of USTHB and LAGA\ laboratory of University of
Paris 13.} the eighteenth century, the expression of the convolution power
of the discrete uniform distribution has been very well known (e.g. de
Moivre in 1711, see \cite[3rd ed., 1756]{moi} or \cite[1731]{moi1}). This
probability distribution arises in many practical situations including, in
particular, games with equal chance, random affectation of tasks for many
servers, and random walks. It is well known, see Dharmadhikari \& Joak-Dev
\cite[1988, p.\thinspace 108-109.]{dha}, that the convolution of two
discrete unimodal distributions may be non unimodal. However, if these
distributions are symmetric, we obtain a symmetric unimodal distribution. It
is a discrete analog of Wintner's Theorem \cite[1938]{win}. Knowing that the
convolution power of the discrete uniform distribution is symmetric
unimodal, the determination of the maximal probability (mode) of such a
distribution and its argument remains a question for consideration. As a
recent work on the problem, one can see the article by Mattner \& Roos \cite[%
2007]{mat} where they establish the upper bound for the maximal probability $%
c_{q,L}<\sqrt{6/\pi q\left( q+2\right) L}$ ($c_{q,L}$ being the maximal
probability of the $L$-th convolution power of the discrete uniform
distribution on $\left\{ 0,1,...,q\right\} $). Before them, there were
several works aiming at finding such a bound. For example, Siegmund-Schultze
\& von Weizs\"{a}cker \cite[2007]{sie} proved the existence of a constant $A$
such that $c_{q,L}<A/\left( q+1\right) \sqrt{L}$ and gave an application of
these upper bounds in the construction of a polygonal recurrence of a
two-dimensional random walk. Alternatively, our aim is to give an explicit
expression of the mode of the $L$-th convolution powers of the discrete
uniform distribution (section two) by means of the unimodality of the
ordinary multinomials for which we study also the strong unimodality
(section one), we end the paper (section three) by giving the generating
functions for the two sequences of\ generalized ordinary multinomials: $\{%
\binom{z}{n}_{q}\}_{n}$ and $\{\binom{nz}{n}_{q}\}_{n},$ $z\in
\mathbb{C}
,$ and thus of $\{c_{q,2n/q}\}_{n},$ when $q$ is even.

\section{Unimodality of ordinary multinomials}

\noindent The ordinary multinomials are a natural extension of binomial
coefficients (see \cite[2007]{bel2} for a recent overview on ordinary
multinomials). Letting $q,L\in
\mathbb{N}
$, for an integer $k=0,1,\ldots ,qL,$ the ordinary multinomial $\binom{L}{k}%
_{q}$ is the coefficient of the $k$-th term of the following multinomial
expansion%
\begin{equation}
\left( 1+x+x^{2}+\cdots +x^{q}\right) ^{L}=\sum\limits_{k\geq 0}\binom{L}{k}%
_{q}x^{k}.  \label{1}
\end{equation}%
with $\binom{L}{k}_{1}=\binom{L}{k}$ (being the usual binomial coefficient)
and $\binom{L}{k}_{q}=0$ for $k>qL.$ Using the classical binomial
coefficient, one has%
\begin{equation}
\binom{L}{k}_{q}=\sum\limits_{j_{1}+j_{2}+\cdots +j_{q}=a}\binom{L}{j_{1}}%
\binom{j_{1}}{j_{2}}\cdots \binom{j_{q-1}}{j_{q}}.  \label{2}
\end{equation}%
Readily established properties are the symmetry relation
\begin{equation}
\binom{L}{k}_{q}=\binom{L}{qL-k}_{q}
\end{equation}%
and the recurrence relation
\begin{equation}
\binom{L}{k}_{q}=\sum\limits_{m=0}^{q}\binom{L-1}{k-m}_{q}.  \label{4}
\end{equation}

\noindent As an illustration of the latter recurrence relation, we give the
triangles of pentanomial and hexanomial coefficients which are just an
extension, well known in the combinatorial literature, of the standard
Pascal triangle.

\begin{center}
Table 1: Triangle of \textit{pentanomial} coefficients: $\binom{L}{k}_{4}$
\end{center}

\noindent $%
\begin{array}{cccccccccccccccc}
L\backslash k & \text{\textit{0}} & \text{\textit{1}} & \text{\textit{2}} &
\text{\textit{3}} & \text{\textit{4}} & \text{\textit{5}} & \text{\textit{6}}
& \text{\textit{7}} & \text{\textit{8}} & \text{9} & \text{\textit{10}} &
\text{\textit{11}} & \text{\textit{12}} & \text{\textit{13}} &  \\
\text{\textit{0}} & \text{\textbf{1}} &  &  &  &  &  &  &  &  &  &  &  &  &
&  \\
\text{\textit{1}} & \text{1} & \text{1} & \text{\textbf{1}} & \text{1} &
\text{1} &  &  &  &  &  &  &  &  &  &  \\
\text{\textit{2}} & \text{1} & \text{2} & \text{3} & \text{4} & \text{%
\textbf{5}} & \text{4} & \text{3} & \text{2} & \text{1} &  &  &  &  &  &  \\
\text{\textit{3}} & \text{1} & \text{3} & \text{6} & \text{10} & \text{15} &
\text{18} & \text{\textbf{19}} & \text{{\small 18+}} & \text{{\small 15+}} &
\text{{\small 10+}} & \text{{\small 6+}} & \text{{\small 3+}} & \text{1} &
&  \\
\text{\textit{4}} & \text{1} & \text{4} & \text{10} & \text{20} & \text{35}
& \text{52} & \text{68} & \text{80} & \text{\textbf{85}} & \text{80} & \text{%
68} & \text{{\small =52}} & \text{35} & \text{20} & \cdots \\
\text{\textit{5}} & \text{1} & \text{5} & \text{15} & \text{35} & \text{70}
& \text{121} & \text{185} & \text{255} & \text{320} & \text{365} & \text{%
\textbf{381}} & \text{365} & \text{320} & \text{255} & \cdots%
\end{array}%
$

\begin{center}
Table 2: Triangle of \textit{hexanomial} coefficients: $\binom{L}{k}_{5}$
\end{center}

\noindent $%
\begin{array}{ccccccccccccccccl}
L\backslash k & \text{\textit{0}} & \text{\textit{1}} & \text{\textit{2}} &
\text{\textit{3}} & \text{\textit{4}} & \text{\textit{5}} & \text{\textit{6}}
& \text{\textit{7}} & \text{\textit{8}} & \text{9} & \text{\textit{10}} &
\text{\textit{11}} & \text{\textit{12}} & \text{\textit{13}} & \text{\textit{%
14}} &  \\
\text{\textit{0}} & \text{\textbf{1}} &  &  &  &  &  &  &  &  &  &  &  &  &
&  &  \\
\text{\textit{1}} & \text{1} & \text{1} & \text{\textbf{1}} & \text{\textbf{1%
}} & \text{1} & \text{1} &  &  &  &  &  &  &  &  &  &  \\
\text{\textit{2}} & \text{1} & \text{2} & \text{3} & \text{4} & \text{5} &
\text{\textbf{6}} & \text{5} & \text{4} & \text{3} & \text{2} & \text{1} &
&  &  &  &  \\
\text{\textit{3}} & \text{1} & \text{3} & \text{6} & \text{10} & \text{15} &
\text{21} & \text{25} & \text{\textbf{27}} & \text{\textbf{27}} & \text{25}
& \text{21} & \text{15} & \text{10} & \text{6} & \text{3} & \text{1} \\
\text{\textit{4}} & \text{1} & \text{4} & \text{10} & \text{20} & \text{35}
& \text{56} & \text{80} & \text{104} & \text{125} & \text{140} & \text{%
\textbf{146}} & \text{140} & \text{125} & \text{104} & \text{80} & \cdots \\
\text{\textit{5}} & \text{1} & \text{5} & \text{15} & \text{35} & \text{70}
& \text{126} & \text{205} & \text{305} & \text{420} & \text{540} & \text{651}
& \text{735} & \text{\textbf{780}} & \text{\textbf{780}} & \text{735} &
\cdots%
\end{array}%
$

\noindent Let us investigate the unimodality of the sequence $\{\binom{L}{k}%
_{q}\}_{k=0}^{m}$. A finite sequence of real numbers $\{a_{k}\}_{k=0}^{m}$ ($%
m\geq 1$) is called \textit{unimodal} if there exists an integer $l\in
\{0,\dots ,m\}$ such that the subsequence $\{a_{k}\}_{k=0}^{l}$ increases,
while $\{a_{k}\}_{k=l}^{m}$ decreases. If $a_{0}\leq a_{1}\leq \dots \leq
a_{l_{0}-1}<a_{l_{0}}=\dots =a_{l_{1}}>a_{l_{1}+1}\geq \dots \geq a_{m}$
then the integers $l_{0},\dots ,l_{1}$ are the \textit{modes} of $%
\{a_{k}\}_{k=0}^{m}$. In the case where $l_{0}=l_{1}$ we talk about a
\textit{peak}, otherwise the set of the values of the mode is called \textit{%
plateau}. For positive non increasing and non decreasing sequences,
unimodality is implied by log-concavity. A sequence $\{a_{k}\}_{k=0}^{m}$ is
said to be \textit{logarithmically concave (}log-concave for short\textit{)}
or \textit{strongly} \textit{unimodal} if $a_{l}^{2}\geq a_{l-1}a_{l+1}$%
\textit{, }$1\leq l\leq m-1$. Also if the sequence is \textit{strictly
log-concave} (SLC for short), i.e. if the previous inequalities are strict,
then the sequences have at most two consecutive modes (a peak or a plateau).
For these notions, one can see Belbachir \& Bencherif \cite[2007]{bel4},
Belbachir \& al \cite[2007]{bel3}, Bertin \& Theodorescu \cite[1984]{ber},
Brenti \cite[1994]{bre}, Comtet \cite[1970]{com}, Dharmadhikari \& Joak-Dev
\cite[1988]{dha}, Keilson \& Gerber \cite[1971]{kei}, Medgyessy \cite[1972]%
{med}, Sagan \cite[2007]{sag}, Stanley \cite[1986]{sta1} and \cite[1989]{sta}
and Tanny \& Zuker \cite[1974]{tan}. In the following, $\left\lfloor
a\right\rfloor $ denotes the greatest integer in $a.$

\ \ \ \ \ \ \ \ \ \ \ \ \ \ \ \ \ \ \ \

\noindent The first main result of this article is the following.

\begin{theorem}
\label{dd}Let $q\geq 1$ and $L\geq 0$ be integers. Then the sequence $\{%
\binom{L}{k}_{q}\}_{k=0}^{qL}$ is unimodal and its smallest mode is given by
\begin{equation*}
k_{L}:=\arg \max_{k}\binom{L}{k}_{q}=\left\lfloor \left( qL+1\right)
/2\right\rfloor ,
\end{equation*}%
Furthermore, we have the following recurrence relation%
\begin{equation*}
\binom{L}{k_{L}}_{q}=\sum\limits_{i\in I_{q}}\binom{L-1}{k_{L-1}+i}_{q},
\end{equation*}%
where
\begin{equation*}
I_{q}=\left\{
\begin{array}{ll}
\left\{ -q/2,\ldots ,q/2\right\} & \text{if }q\text{ is even,} \\
\left\{ -\left( q+1\right) /2,\ldots ,\left( q-1\right) /2\right\} & \text{%
if }q\text{ and }L\text{ are odd,} \\
\left\{ -\left( q-1\right) /2,\ldots ,\left( q+1\right) /2\right\} & \text{%
otherwise.}%
\end{array}%
\right.
\end{equation*}
\end{theorem}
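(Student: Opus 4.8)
The plan is to treat the three assertions in turn: unimodality of the row $\{\binom{L}{k}_{q}\}_{k=0}^{qL}$, the exact location of its smallest mode $k_{L}$, and the recurrence for the maximal value $\binom{L}{k_{L}}_{q}$. For unimodality I would read \eqref{1} as exhibiting $\{\binom{L}{k}_{q}\}_{k}$ as the $L$-fold convolution of the constant vector $(1,\dots ,1)$ of length $q+1$; in particular every entry with $0\leq k\leq qL$ is positive. The base vector is log-concave with no internal zeros, so by the standard fact that a convolution of two non-negative log-concave sequences with no internal zeros is again log-concave with no internal zeros, induction on $L$ through $(1+\cdots +x^{q})^{L}=(1+\cdots +x^{q})(1+\cdots +x^{q})^{L-1}$ makes the whole row log-concave; since its entries are positive, the quoted implication (log-concavity $\Rightarrow$ unimodality) applies. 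Alternatively, the normalised row is the $L$-th convolution power of the symmetric unimodal discrete uniform law on $\{0,\dots ,q\}$, so the cited discrete analogue of Wintner's theorem gives symmetric unimodality at once.

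For the location of the mode I would combine unimodality with the symmetry $\binom{L}{k}_{q}=\binom{L}{qL-k}_{q}$, which makes the row palindromic about $qL/2$, so the maximisers form an interval centred there. To collapse this interval to the one or two central indices for $L\geq 2$ (the cases $L\leq 1$ being degenerate and read off directly), I would prove that the coefficients strictly increase up to the centre. The device is the telescoped first difference that \eqref{4} yields, $\binom{L}{k}_{q}-\binom{L}{k-1}_{q}=\binom{L-1}{k}_{q}-\binom{L-1}{k-q-1}_{q}$, reducing strict positivity of the left side to comparing two level-$(L-1)$ entries; folding the larger index back by the symmetry of that shorter row and invoking its strict increase up to its own centre makes the right side positive below the centre and zero only at the single flat step of the odd case. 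Starting from the directly verified row $L=2$ and inducting upward, this pins the smallest maximiser to the central index $k_{L}$ named in the statement.

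The recurrence is then a reindexing of \eqref{4}. Setting $k=k_{L}$ gives $\binom{L}{k_{L}}_{q}=\sum_{m=0}^{q}\binom{L-1}{k_{L}-m}_{q}$, and writing $k_{L}-m=k_{L-1}+i$ converts $m\in\{0,\dots ,q\}$ into $i\in\{d-q,\dots ,d\}$, where $d:=k_{L}-k_{L-1}$; thus $I_{q}=\{d-q,\dots ,d\}$ is automatically a block of $q+1$ consecutive integers and only the single shift $d$ has to be found. Evaluating $d$ against the parities of $q$ and $L$ gives $d=q/2$ when $q$ is even, $d=(q-1)/2$ when $q$ and $L$ are both odd, and $d=(q+1)/2$ otherwise, which reproduces the three displayed sets $I_{q}$.

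I expect the convolution/log-concavity step to be routine and the reindexing to be purely formal once $d$ is known. The \emph{main obstacle} is the parity bookkeeping that governs both the mode and the shift $d$: in the two regimes with $q$ odd, exactly one of the neighbouring levels $L$, $L-1$ carries a two-element plateau (level $L$ when $L$ is odd, level $L-1$ when $L$ is even) while the other is a single peak. Choosing the smallest maximiser on each side makes $d$ equal $(q-1)/2$ in the first regime and $(q+1)/2$ in the second, which is precisely why $I_{q}$ is asymmetric there, in contrast with the symmetric block $\{-q/2,\dots ,q/2\}$ of the even case. Keeping this floor-versus-parity interaction straight, and carrying through the inductive strict-increase argument that legitimises calling the central index the mode, is where the care is needed.
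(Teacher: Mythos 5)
Your proposal is correct in substance, and it is essentially a fully worked-out version of what the paper leaves implicit: the paper's entire proof is the single sentence ``proceed by induction over $L$ using the recurrence relation (\ref{4})'', and your telescoped first difference $\binom{L}{k}_{q}-\binom{L}{k-1}_{q}=\binom{L-1}{k}_{q}-\binom{L-1}{k-q-1}_{q}$, combined with the palindromic symmetry of row $L-1$ and an inductive strict-increase hypothesis, is exactly the induction the author is gesturing at. What you add beyond the paper is a second, independent route to unimodality via closure of log-concave sequences with no internal zeros under convolution (or the discrete Wintner theorem cited in the introduction); that buys you strong unimodality of each row outright, which the paper only obtains asymptotically via Odlyzko--Richmond, and it decouples the qualitative statement (unimodal) from the quantitative one (where the mode sits), so the delicate induction is only needed for the latter. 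The reindexing of (\ref{4}) into the window $I_{q}=\{d-q,\dots ,d\}$ with $d=k_{L}-k_{L-1}$ is the right mechanism for the recurrence.

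One wrinkle you should make explicit, because as written your two halves are not using the same $k_{L}$. For odd $qL$ the plateau is $\{\lfloor qL/2\rfloor ,\lfloor qL/2\rfloor +1\}$ (see the $q=5$, $L=3$ row of Table 2, where the modes are $7$ and $8$), so the \emph{smallest} mode is $\lfloor qL/2\rfloor$, whereas the displayed formula $\lfloor (qL+1)/2\rfloor$ picks out the \emph{larger} plateau index; this off-by-one is in the paper's statement itself (its own Remark is consistent with $\lfloor qL/2\rfloor$). If you take $k_{L}=\lfloor (qL+1)/2\rfloor$ literally, then in the case $q,L$ both odd you get $d=(q+1)/2$ and the window $\{-(q-1)/2,\dots ,(q+1)/2\}$, not the stated $\{-(q+1)/2,\dots ,(q-1)/2\}$; your value $d=(q-1)/2$ comes from silently switching to the lower plateau index $\lfloor qL/2\rfloor$ on the left while keeping the peak $q(L-1)/2$ on the right. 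The recurrence is true with either window, because the two candidate centres $k_{L-1}+d$ are the two equal plateau entries of row $L$, but a clean write-up should fix one convention for $k_{L}$ and state which plateau index the window is centred on.
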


\begin{proof}
It suffices, for each one of the two cases: $q$\ odd and $q$\ even, to
proceed by induction over $L$\ using the recurrence relation (\ref{4}).
\end{proof}

\begin{remark}
For odd $qL$\ we have a plateau of two modes: $qL/2$\ and $qL/2+1$.\
Otherwise we have a peak:\textbf{\ }$\left( qL+1\right) /2$.
\end{remark}

\section{Determining the maximal probability for convolution powers of
discrete uniform distribution}

\noindent We are now able to achieve our purpose: the expression of the
maximal probability of the $L$-th convolution powers of the discrete uniform
distribution. Let $U_{q}$ be the random variable of the discrete uniform
distribution on $\left\{ 0,1,...,q\right\} $\ and let $U_{q}^{\star L}$ be
its $L$-th convolution powers:%
\begin{equation*}
U_{q}:=\frac{1}{q+1}\left( \delta _{0}+\delta _{1}+\cdots +\delta
_{q}\right) \ \ \ \ \text{(}\delta _{a}\text{ is the Dirac measure).}
\end{equation*}

\noindent In \cite[2007]{bel2}, Belbachir and al. established a link between
the ordinary multinomials and the density probability of convolution powers
of discrete uniform distribution. With respect to the counting measure, such
a density is given by
\begin{equation*}
P\left( U_{q}^{\star L}=k\right) =\frac{\binom{L}{k}_{q}}{\left( q+1\right)
^{L}},\ \ k=0,1,\ldots ,qL.
\end{equation*}

\begin{remark}
From Odlyzko and Richmond \cite[1985]{odl} we know that for $L$\
sufficiently large, the sequence of probabilities $\{P\left( U_{q}^{\star
L}=k\right) \}_{k}$\ is strongly unimodal, from which we easily deduce that
the sequence $\{\binom{L}{k}_{q}\}_{k}$\ is also asymptotically strongly
unimodal.
\end{remark}

\begin{conjecture}
For each positive integer $q,$ the sequence $\{\binom{L}{k}_{q}\}_{k}$ is
SLC.
\end{conjecture}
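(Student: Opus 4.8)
The plan is to prove the pointwise strict inequality
\[
\binom{L}{k}_{q}^{2} > \binom{L}{k-1}_{q}\binom{L}{k+1}_{q}, \qquad 1\le k\le qL-1,
\]
for every $L\ge 2$ (for $L\le 1$ the row is $1$ or $1,\dots ,1$, so the statement is vacuous for $q=1$ and merely \emph{weakly} log-concave for $q\ge 2$; thus the natural reading of the conjecture starts at $L=2$). My first attempt would be induction on $L$ built on the recurrence (\ref{4}): the level-$L$ row is the Cauchy product (convolution) of the level-$(L-1)$ row with the all-ones block $(1,\dots ,1)$ of length $q+1$. The base case $L=2$ is immediate, since $\binom{2}{k}_{q}=\min(k,2q-k)+1$ gives the sequence $1,2,\dots ,q,q+1,q,\dots ,2,1$, for which the defect $\binom{2}{k}_{q}^{2}-\binom{2}{k-1}_{q}\binom{2}{k+1}_{q}$ equals $1$ on the strictly monotone part and $2q+1$ at the peak, hence is strictly positive throughout.

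The main obstacle is that strictness is not propagated by the inductive step for free. It is classical (this is the log-concavity-under-convolution phenomenon underlying the Keilson--Gerber and Odlyzko--Richmond results cited above) that the convolution of two positive log-concave sequences with no internal zeros is again log-concave; applied inductively this only reproves the known \emph{weak} unimodality, because the kernel $(1,\dots ,1)$ is itself merely weakly log-concave ($1^{2}=1\cdot 1$). In fact, convolving a sharply peaked strictly log-concave sequence with a long flat block can manufacture a run of three equal entries, which already destroys strict log-concavity, so no purely formal ``SLC $*$ LC $=$ SLC'' lemma is available. What should save the specific rows $\binom{L}{k}_{q}$ is their extra structure: they are symmetric and, under the inductive hypothesis, strictly unimodal up to at most a two-term central plateau, and a short computation with the difference $c_{k+1}-c_{k}=a_{k+1}-a_{k-q}$ shows that such a symmetric strictly unimodal sequence cannot align two symmetric pairs so as to produce three equal consecutive values after convolution with $(1,\dots ,1)$. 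This rules out the equality-at-a-plateau failure mode, but the genuine content is still the \emph{quantitative} margin: one must track an explicit positive lower bound for $D_{k}^{(L)}:=\binom{L}{k}_{q}^{2}-\binom{L}{k-1}_{q}\binom{L}{k+1}_{q}$ and show the convolution step preserves its positivity, equivalently that the discrete second difference of $k\mapsto\log\binom{L}{k}_{q}$ stays strictly negative. Making this margin uniform in $k$, especially near the tails, is the hard part.

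A complementary route would be analytic. For fixed $q$ we have $\binom{L}{k}_{q}/(q+1)^{L}=\mathbb{P}(U_{q}^{\star L}=k)$, a sum of $L$ independent copies of $U_{q}$, and a local limit theorem with an Edgeworth correction gives, uniformly for $k$ in the bulk, a Gaussian approximation with controlled error; strict log-concavity of the Gaussian then transfers, proving the inequality for all $L\ge L_{0}(q)$, after which the finitely many rows $2\le L<L_{0}(q)$ are checked directly. Here the difficulty is again at the tails, where the Gaussian estimate is weakest and one must instead use an exponentially tilted local limit theorem, or the explicit inclusion--exclusion formula for $\binom{L}{k}_{q}$, to keep the error below the margin $D_{k}^{(L)}$. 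Either way the essential obstacle is the same: upgrading the automatic weak log-concavity to a strict one uniformly across the entire support.
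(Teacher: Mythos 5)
The statement you are addressing is labelled a \emph{conjecture} in the paper: no proof is given there, and the preceding remark only records the asymptotic version (strong unimodality for $L$ sufficiently large, via Odlyzko--Richmond). Your text is therefore being measured against an open problem, and as written it does not close it: it is a plan with the decisive steps explicitly left open. The base case $L=2$ is correct, and your caveat that $L\le 1$ must be excluded (the row $1,\dots,1$ is only weakly log-concave) is a genuine and worthwhile observation about how the conjecture must be read. But the inductive step is precisely where the content lies, and you concede that convolving an SLC sequence with the flat kernel $(1,\dots,1)$ does not preserve strict log-concavity in general. The ``short computation with $c_{k+1}-c_k=a_{k+1}-a_{k-q}$'' is asserted rather than carried out, and even granting it, it only excludes a three-term plateau, i.e.\ equality in $c_k^2\ge c_{k-1}c_{k+1}$ when the three terms are equal; equality (or failure) can in principle occur at a $k$ where the three values are distinct, and nothing in your argument addresses that. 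The quantitative lower bound on $D_k^{(L)}$ that you correctly identify as the real issue is never produced.

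The analytic route has the same status: a local limit theorem with Edgeworth correction gives strict log-concavity in the bulk for $L\ge L_0(q)$, but you give no effective $L_0(q)$, no tail argument beyond naming the tools one would need (tilted local limit theorems, the inclusion--exclusion formula for $\binom{L}{k}_q$), and no verification of the finitely many remaining rows. In short, your proposal is a candid and well-informed survey of why the conjecture is hard, and it correctly locates the obstruction (upgrading automatic weak log-concavity to a uniform strict inequality), but it does not constitute a proof, and it should not be presented as one. If you want to contribute something checkable, the $L=2$ computation and the observation about the $L\le1$ rows could be stated as a remark; everything past that point is currently at the level of the paper's own conjecture.
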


\noindent From Theorem \ref{dd}, as second main result, we give the values
of $c_{q,L}:=\max_{k}\binom{L}{k}_{q}/\left( q+1\right) ^{L}.$

\begin{theorem}
The maximal probability of the $L^{th}$ convolution power of the discrete
uniform distribution over $\left\{ 0,1,\ldots ,q\right\} $ is%
\begin{equation*}
c_{q,L}=\frac{1}{\left( q+1\right) ^{L}}\binom{L}{\left\lfloor \left(
qL+1\right) /2\right\rfloor }_{q}.
\end{equation*}
\end{theorem}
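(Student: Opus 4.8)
The plan is to reduce this theorem directly to the two facts already established in the excerpt, so that almost no new work is required. The quantity $c_{q,L}$ is defined as $\max_{k}\binom{L}{k}_{q}/(q+1)^{L}$, and the link established by Belbachir et al. gives $P(U_{q}^{\star L}=k)=\binom{L}{k}_{q}/(q+1)^{L}$ for $k=0,1,\ldots,qL$. Hence the maximal probability is literally the maximum over $k$ of these density values, which is exactly $c_{q,L}$. The entire content therefore lies in identifying the argument $k$ at which $\binom{L}{k}_{q}$ attains its maximum, and this is precisely what Theorem~\ref{dd} supplies.

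First I would invoke Theorem~\ref{dd}, which asserts that the sequence $\{\binom{L}{k}_{q}\}_{k=0}^{qL}$ is unimodal with smallest mode $k_{L}=\lfloor(qL+1)/2\rfloor$. Unimodality guarantees that the maximum is attained at the mode (or, in the plateau case, at the two consecutive modes), and since the fraction $1/(q+1)^{L}$ is a positive constant independent of $k$, dividing through preserves the location of the maximum. Consequently
\begin{equation*}
\max_{k}\binom{L}{k}_{q}=\binom{L}{\lfloor(qL+1)/2\rfloor}_{q},
\end{equation*}
and dividing both sides by $(q+1)^{L}$ yields the claimed formula for $c_{q,L}$.

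The only subtlety worth flagging is the case of odd $qL$, where by the Remark following Theorem~\ref{dd} the sequence has a plateau of two equal modes $qL/2$ and $qL/2+1$. Here I would note that the two maximal coefficients are equal by the symmetry relation $\binom{L}{k}_{q}=\binom{L}{qL-k}_{q}$, so evaluating at the smallest mode $\lfloor(qL+1)/2\rfloor$ gives the same value as evaluating at the larger one; thus the formula is well defined and correct regardless of whether we land on a peak or a plateau. In short, there is no genuine obstacle: the theorem is a transcription of Theorem~\ref{dd} through the definition of $c_{q,L}$ and the density identity, and the proof amounts to substituting the value of the smallest mode into the expression for the maximal probability.
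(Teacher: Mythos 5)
Your proposal is correct and matches the paper's approach exactly: the paper offers no separate proof, presenting this theorem as an immediate consequence of Theorem~\ref{dd} combined with the density identity $P(U_{q}^{\star L}=k)=\binom{L}{k}_{q}/(q+1)^{L}$, which is precisely the substitution you carry out. Your extra remark on the plateau case for odd $qL$ is a harmless (and correct) clarification that the paper relegates to the Remark following Theorem~\ref{dd}.
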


\section{Some generating functions}

\noindent As a third main result, we give the generating functions for the
sequence of generalized ordinary multinomials, the sequences $\{\binom{z}{n}%
_{q}\}_{n}$ and $\{\binom{nz}{n}_{q}\}_{n},$ $z\in
\mathbb{C}
,$ and the extended sequence of maximal probabilities for convolution power
of discrete uniform distribution: $\{c_{q,2n/q}\}_{n}$.

\begin{definition}
For $z\in
\mathbb{C}
$, we define the generalized ordinary multinomials, as follows%
\begin{equation}
\binom{z}{k}_{q}:=\sum\limits_{k_{1}+k_{2}+\cdots +k_{q}=k}\frac{z\left(
z-1\right) \cdots \left( z-k_{1}+1\right) }{\left( k_{1}-k_{2}\right)
!\left( k_{2}-k_{3}\right) !\cdots \left( k_{q-1}-k_{q}\right) !k_{q}!}.
\end{equation}%
This definition is motivated by the relation (\ref{2}).
\end{definition}

\begin{lemma}
We have the following inequality%
\begin{equation*}
\sum\limits_{k_{1}+\cdots +k_{q}=k}\frac{z\left( z-1\right) \cdots \left(
z-k_{1}+1\right) }{\left( k_{1}-k_{2}\right) !\cdots \left(
k_{q-1}-k_{q}\right) !k_{q}!}=\sum\limits_{\substack{ h_{1}+2h_{2}+\cdots
+qh_{q}=k  \\ h_{1}+h_{2}+\cdots +h_{q}=k_{1}}}\frac{z\left( z-1\right)
\cdots \left( z-k_{1}+1\right) }{h_{1}!h_{2}!\cdots h_{q-1}!h_{q}!}.
\end{equation*}
\end{lemma}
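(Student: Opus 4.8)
The plan is to prove the stated identity by a term-by-term change of summation variables, realized as an explicit bijection between the two index sets. On the left the summation effectively runs over the $q$-tuples $(k_1,\ldots,k_q)$ with $k_1\geq k_2\geq\cdots\geq k_q\geq 0$ and $k_1+k_2+\cdots+k_q=k$, the monotonicity being forced by the factorials $(k_i-k_{i+1})!$ in the denominator (with the convention that any summand containing a factorial of a negative integer is zero). I would introduce the new variables
\[
h_j:=k_j-k_{j+1}\quad(1\leq j\leq q-1),\qquad h_q:=k_q,
\]
equivalently setting $k_{q+1}:=0$ so that $h_j=k_j-k_{j+1}$ uniformly for $1\leq j\leq q$. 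Under the monotonicity hypothesis each $h_j$ is a non-negative integer, and the map is inverted by $k_j=h_j+h_{j+1}+\cdots+h_q$; hence it is a bijection from the monotone tuples summing to $k$ onto the tuples $(h_1,\ldots,h_q)\in\mathbb{N}^q$ satisfying the two constraints on the right-hand side.

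Next I would check that the index constraints transform correctly. Telescoping the inverse relations gives $h_1+h_2+\cdots+h_q=k_1$, which is precisely the second constraint on the right and simultaneously identifies the quantity occurring in the numerator. For the first constraint I would expand $\sum_{j=1}^{q}j\,h_j=\sum_{j=1}^{q}j\,(k_j-k_{j+1})$ and telescope (Abel summation), obtaining $\sum_{j=1}^{q}k_j=k$, so that $\sum_{j=1}^{q}j\,h_j=k$ as required. This telescoping is the only computation of real substance, and it is the step where I would take care with the boundary term $k_{q+1}=0$.

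Finally I would compare the summands. Since $k_1=h_1+\cdots+h_q$ is preserved, the numerator $z(z-1)\cdots(z-k_1+1)$ is unchanged, while the denominator becomes
\[
(k_1-k_2)!\cdots(k_{q-1}-k_q)!\,k_q!=h_1!\,h_2!\cdots h_{q-1}!\,h_q!.
\]
Thus the bijection pairs each summand on the left with exactly one summand on the right of equal value, and summing over the common index set yields the asserted equality. I do not expect a genuine obstacle: the entire content is the verification that the linear substitution $h_j=k_j-k_{j+1}$ is a bijection carrying the single constraint $\sum_j k_j=k$ on the left to the paired constraints $\sum_j j\,h_j=k$ and $\sum_j h_j=k_1$ on the right.
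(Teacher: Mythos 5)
The paper states this lemma without any proof, so there is nothing to compare against; your argument is correct and is evidently the intended justification. The substitution $h_j=k_j-k_{j+1}$ (with $k_{q+1}:=0$) is a bijection from the non-increasing tuples $k_1\geq\cdots\geq k_q\geq 0$ with $\sum_j k_j=k$ --- the only tuples that survive, given the convention that a factorial of a negative integer kills the summand --- onto the tuples $(h_1,\dots,h_q)\in\mathbb{N}^q$ with $\sum_j jh_j=k$, and your two telescoping computations ($\sum_j h_j=k_1$ and $\sum_j jh_j=\sum_j k_j$) together with the identity of the denominators $(k_1-k_2)!\cdots k_q!=h_1!\cdots h_q!$ settle the claim. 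This is exactly the rewriting the author uses silently in the proof of the generating-function theorem that follows, so your proof fills a genuine gap in the exposition; note also that ``inequality'' in the lemma's statement is a slip for ``identity.''
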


\begin{theorem}
Let $z\in
\mathbb{C}
$, the generating function for generalized ordinary multinomials is given by%
\begin{equation*}
\sum_{n\geq 0}\binom{z}{n}_{q}t^{n}=\left( 1+t+t^{2}+\cdots +t^{q}\right)
^{\alpha }.
\end{equation*}
\end{theorem}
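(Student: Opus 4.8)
The plan is to deduce the identity from the generalized binomial theorem together with the multinomial theorem, working throughout in the ring of formal power series in $t$ over $\mathbb{C}$ so that analytic convergence need not be checked at the outset. (The exponent written $\alpha$ in the statement is of course understood to be $z$.)

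First, I would set $u:=t+t^{2}+\cdots +t^{q}$ and write $1+t+\cdots +t^{q}=1+u$, then apply the generalized binomial series
\[
(1+u)^{z}=\sum_{k_{1}\geq 0}\frac{z(z-1)\cdots (z-k_{1}+1)}{k_{1}!}\,u^{k_{1}}.
\]
Because $u$ has no constant term, $u^{k_{1}}$ is divisible by $t^{k_{1}}$, so each fixed power of $t$ receives contributions from only finitely many $k_{1}$; this makes the substitution a legitimate formal identity rather than a merely analytic one.

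Next, I would expand each $u^{k_{1}}=(t+t^{2}+\cdots +t^{q})^{k_{1}}$ by the multinomial theorem,
\[
u^{k_{1}}=\sum_{h_{1}+h_{2}+\cdots +h_{q}=k_{1}}\frac{k_{1}!}{h_{1}!\,h_{2}!\cdots h_{q}!}\,t^{h_{1}+2h_{2}+\cdots +qh_{q}},
\]
and substitute. The factor $k_{1}!$ cancels against the denominator of the binomial term, leaving
\[
(1+t+\cdots +t^{q})^{z}=\sum_{k_{1}\geq 0}\ \sum_{h_{1}+\cdots +h_{q}=k_{1}}\frac{z(z-1)\cdots (z-k_{1}+1)}{h_{1}!\,h_{2}!\cdots h_{q}!}\,t^{h_{1}+2h_{2}+\cdots +qh_{q}}.
\]

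Finally, I would reindex by collecting the coefficient of a fixed power $t^{n}$, i.e.\ summing over all $(h_{1},\dots ,h_{q})$ with $h_{1}+2h_{2}+\cdots +qh_{q}=n$; since $h_{1}+\cdots +h_{q}\leq h_{1}+2h_{2}+\cdots +qh_{q}=n$, only the finitely many $k_{1}\leq n$ contribute, so the rearrangement is harmless. The coefficient of $t^{n}$ is then exactly
\[
\sum_{\substack{h_{1}+2h_{2}+\cdots +qh_{q}=n\\ h_{1}+h_{2}+\cdots +h_{q}=k_{1}}}\frac{z(z-1)\cdots (z-k_{1}+1)}{h_{1}!\cdots h_{q}!},
\]
which by the Lemma equals $\binom{z}{n}_{q}$, establishing the asserted generating function. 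The only delicate point is the double-sum rearrangement, and I expect this to be the main (if modest) obstacle; it is resolved precisely by the observation above that every coefficient of $t^{n}$ is a finite sum, so the manipulation is valid as an identity of formal power series (and analytically for $|t|$ small, where $|u|<1$).
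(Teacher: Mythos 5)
Your proposal is correct and follows essentially the same route as the paper's own proof: expand $\left(1+t+\cdots+t^{q}\right)^{z}$ by the generalized binomial theorem, then each power of $t+t^{2}+\cdots+t^{q}$ by the multinomial theorem, and match the coefficient of $t^{n}$ with $\binom{z}{n}_{q}$ via the Lemma. Your explicit justification of the double-sum rearrangement as a formal power series identity (and your reading of the misprinted exponent $\alpha$ as $z$) only adds care that the paper leaves implicit.
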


\begin{proof}
Using the Lemma, we have $\sum_{n\geq 0}\binom{z}{n}_{q}t^{n}=\sum\limits
_{\substack{ h_{1}+2h_{2}+\cdots +qh_{q}=n  \\ h_{1}+h_{2}+\cdots +h_{q}=m}}%
\binom{z}{m}\frac{m!}{h_{1}!h_{2}!\cdots h_{q-1}!h_{q}!}t^{n}.$

On the other hand

$%
\begin{array}{ll}
\left( 1+t+t^{2}+\cdots +t^{q}\right) ^{z} & =\sum_{m\geq 0}\binom{z}{m}%
\left( t+t^{2}+\cdots +t^{q}\right) ^{m} \\
& \ \ \  \\
& =\sum_{m\geq 0}\binom{z}{m}\sum\limits_{\substack{ l_{1}+2l_{2}+\cdots
+ql_{q}=n  \\ l_{1}+l_{2}+\cdots +l_{q}=m}}\frac{m!}{l_{1}!l_{2}!\cdots
l_{q-1}!l_{q}!}t^{l_{1}+2l_{2}+\cdots +ql_{q}}.%
\end{array}%
$

We conclude by summation over $n\geq 0$ is equivalent to summation over $%
m\geq 0.$
\end{proof}

\begin{remark}
Problem 19 of Comtet \cite{com}, Vol.1, p. 172, states that%
\begin{equation*}
\sum_{n\geq 0}x^{n}\complement _{t^{n}}\left( 1+t+t^{2}\right) ^{n}=\left(
1-2x-3x^{2}\right) ^{-\frac{1}{2}},
\end{equation*}%
using the fact that the coefficient of$\ t^{n}$ in the development of $%
\left( 1+t+t^{2}\right) ^{n}:\complement _{t^{n}}\left( 1+t+t^{2}\right)
^{n}=\binom{n}{n}_{2}$ is $\max_{k}\binom{n}{k}_{2},$ we obtain the
following combinatorial identity%
\begin{equation*}
G_{2}\left( t\right) :=\sum_{n\geq 0}c_{2,n}t^{n}=\left( 1+\frac{t}{3}%
\right) ^{-1/2}\left( 1-t\right) ^{-1/2}.
\end{equation*}%
This last identity can be shown as the generating function of the sequence $%
\{c_{2,n}\}_{n}$.
\end{remark}

\begin{theorem}
Let $z\in
\mathbb{C}
$, the generating function of the sequence $\{\binom{nz}{n}_{q}\}_{n}$ is
given by
\begin{equation*}
\sum_{n\geq 0}\binom{nz}{n}_{q}t^{n}=u\left( 1-z\frac{u+2u^{2}+\cdots +qu^{q}%
}{1+u+u^{2}+\cdots +u^{q}}\right) ^{-1},
\end{equation*}%
where $u$ is a solution of the equation \ $t=u\left( 1+u+u^{2}+\cdots
+u^{q}\right) ^{-z}.$
\end{theorem}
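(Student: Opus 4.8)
The plan is to obtain the generating function by Lagrange inversion. By the preceding theorem, applied with the exponent $nz$ in place of $z$, one has $\binom{nz}{n}_q=[t^n]\left(1+t+\cdots+t^q\right)^{nz}$, where $[t^n]$ denotes extraction of the coefficient of $t^n$. Writing $\psi(w):=1+w+\cdots+w^q$ and $\phi(w):=\psi(w)^z$, this is
\[
\binom{nz}{n}_q=[w^n]\,\phi(w)^n,
\]
so the series we want is the diagonal sum $\sum_{n\geq0}\big([w^n]\phi(w)^n\big)t^n$. First I would introduce $u=u(t)$ as the unique formal power series with $u(0)=0$ solving $u=t\,\phi(u)$; since $\phi(0)=1\neq0$ such a $u$ exists and is unique, and this equation is exactly $t=u\,(1+u+\cdots+u^q)^{-z}$ from the statement.

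The core step is the residue (Lagrange--B\"urmann) form of inversion. Writing each coefficient as $[w^n]\phi(w)^n=\frac{1}{2\pi i}\oint \phi(w)^n w^{-n-1}\,dw$ over a small contour, I would formally interchange the sum and the integral and sum the geometric series in $n$ to get
\[
\sum_{n\geq0}\binom{nz}{n}_q t^n=\frac{1}{2\pi i}\oint\frac{dw}{\,w-t\,\phi(w)\,},
\]
whose only relevant singularity is the simple zero $w=u$ of $w-t\phi(w)$. Taking the residue there yields the value $\big(1-t\,\phi'(u)\big)^{-1}$; more generally, for a weight $F$ one has $\sum_{n}\big([w^n]F(w)\phi(w)^n\big)t^n=F(u)\big(1-t\phi'(u)\big)^{-1}$, the numerator $F(u)$ being pinned down by the normalization $\binom{0}{0}_q=1$ at $t=u=0$. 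It then remains to simplify the Jacobian factor: from $\phi=\psi^z$ we have $\phi'(u)=z\,\psi(u)^{z-1}\psi'(u)$, and substituting $t=u\,\psi(u)^{-z}$ collapses this to
\[
t\,\phi'(u)=z\,\frac{u\,\psi'(u)}{\psi(u)}=z\,\frac{u+2u^2+\cdots+qu^q}{1+u+u^2+\cdots+u^q},
\]
which is precisely the bracketed expression.

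The step I expect to be the main obstacle is pinning down the exact form of the inversion formula, namely the appearance and correct evaluation of the Jacobian denominator $1-t\phi'(u)=\big(w-t\phi(w)\big)'\big|_{w=u}$ together with the numerator $F(u)$; this bookkeeping governs both the $(1-\cdots)^{-1}$ shape and the multiplicative prefactor, and it is exactly where an off-by-one in the index or a mislabeled weight would intrude. The subsequent algebraic reduction of $t\phi'(u)$ via the functional equation is routine. As a calibration I would test $q=1,\ z=1$, where $\binom{n}{n}_1=1$ forces the series to be $(1-t)^{-1}$: here $t=u/(1+u)$ gives $z\,u\psi'(u)/\psi(u)=u/(1+u)=t$, so the bracket is $1-t$ and the prefactor is $1$, confirming the shape of the answer.
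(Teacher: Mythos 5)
Your argument is essentially the paper's own proof made explicit: the paper simply invokes Hermite's theorem (i.e.\ Lagrange--B\"urmann inversion) for $t\mapsto t\left(1+t+\cdots+t^q\right)^{-z}$, and your residue derivation of $\sum_{n}\bigl([w^n]\phi(w)^n\bigr)t^n=\bigl(1-t\phi'(u)\bigr)^{-1}$ with $\phi=\psi^z$, followed by the reduction $t\phi'(u)=z\,u\psi'(u)/\psi(u)$, is exactly that computation spelled out. Note that your (correct) conclusion lacks the prefactor $u$ appearing in the stated formula; that prefactor is evidently a typo, since the left-hand side equals $\binom{0}{0}_q=1$ at $t=0$ while $u(0)=0$, and the subsequent theorem of the paper (the specialization $z=2/q$) is stated without it.
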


\begin{proof}
Use Hermite's Theorem \cite{com} for the function $t\mapsto t\left(
1+t+\cdots +t^{q}\right) ^{-z}.$
\end{proof}

\begin{theorem}
For $q$ even, the generating function of the sequence $\{c_{q,2n/q}\}_{n}$
is given by
\begin{equation*}
G_{q}\left( t\right) :=\sum_{n\geq 0}t^{n}c_{q,2n/q}=\left( 1-\frac{2}{q}%
\frac{u+2u^{2}+\cdots +qu^{q}}{1+u+u^{2}+\cdots +u^{q}}\right) ^{-1}=\frac{q%
}{2}\frac{1+\sum_{k=1}^{q/2}\left( u^{-k}+u^{k}\right) }{\sum_{k=1}^{q/2}k%
\left( u^{-k}-u^{k}\right) },
\end{equation*}%
where $u$ is a solution of the equation%
\begin{equation*}
t=u\left( \frac{q+1}{1+u+u^{2}+\cdots +u^{q}}\right) ^{2/q}=\left( \frac{q+1%
}{1+\sum_{k=1}^{q/2}\left( u^{-k}+u^{k}\right) }\right) ^{2/q}.
\end{equation*}
\end{theorem}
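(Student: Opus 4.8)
The plan is to recognize $G_q$ as a rescaling of the generating function of $\{\binom{nz}{n}_q\}_n$ obtained just above, and then to recast both the implicit description of $u$ and the quotient defining $G_q$ into palindromic form. First I would reduce to the generalized multinomials: since $q$ is even, for $L=2n/q$ we have $qL=2n$, hence $\lfloor(qL+1)/2\rfloor=n$, and the maximal-probability theorem gives
\begin{equation*}
c_{q,2n/q}=\frac{1}{(q+1)^{2n/q}}\binom{2n/q}{n}_q=\binom{nz}{n}_q\bigl((q+1)^{-z}\bigr)^n,\qquad z:=\tfrac2q .
\end{equation*}
Summing against $t^n$ shows that $G_q(t)=\sum_{n\ge0}\binom{nz}{n}_q\tau^n$ is precisely the generating function of $\{\binom{nz}{n}_q\}_n$ evaluated at the rescaled variable $\tau:=t(q+1)^{-2/q}$.

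Next I would apply the earlier generating-function result, equivalently Hermite's (Lagrange--B\"urmann) inversion for $\phi(u)=(1+u+\cdots+u^q)^z$, for which $\binom{nz}{n}_q=[u^n]\phi(u)^n$. With $u=u(\tau)$ the branch vanishing at $0$ of $u=\tau\phi(u)$, the diagonal sum equals $(1-\tau\phi'(u))^{-1}$; since $\tau\phi'(u)=u\,\phi'(u)/\phi(u)=z\,(u+2u^2+\cdots+qu^q)/(1+u+\cdots+u^q)$, this is the first displayed form of $G_q$. Substituting $\tau=t(q+1)^{-2/q}$ into $u=\tau\phi(u)$ and clearing the constant $(q+1)^{-2/q}$ gives the first displayed equation $t=u\bigl((q+1)/(1+u+\cdots+u^q)\bigr)^{2/q}$.

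Finally I would pass to the symmetric forms through the palindromic factorization
\begin{equation*}
1+u+\cdots+u^q=u^{q/2}\Bigl(1+\sum_{k=1}^{q/2}(u^{-k}+u^k)\Bigr),
\end{equation*}
which is available because $q$ is even. Raising it to the power $2/q$ turns $u^{q/2}$ into $u$, cancelling the explicit leading $u$ in the equation and yielding the second equation for $u$. For $G_q$ I would write it as $qS/(qS-2T)$ with $S=\sum_{j=0}^q u^j$ and $T=\sum_{j=0}^q j\,u^j$; re-indexing $j=k+q/2$ gives $\tfrac q2 S-T=u^{q/2}\sum_{k=1}^{q/2}k(u^{-k}-u^k)$ while $S=u^{q/2}\bigl(1+\sum_{k=1}^{q/2}(u^{-k}+u^k)\bigr)$, so the factor $u^{q/2}$ cancels and the claimed symmetric quotient drops out. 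The only delicate points are keeping track of the fractional power $(\cdot)^{2/q}$ together with the correct branch $u(0)=0$, and the sign bookkeeping in the palindromic re-indexing; once the factorization above is secured, both rewritings are mechanical, so this is where care rather than genuine difficulty is required.
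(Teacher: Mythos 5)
Your proof is correct and follows essentially the same route as the paper, whose entire proof reads ``Use the above Theorem for $z=2/q$, and the change of variable $t\rightarrow (q+1)t$''; you supply all the details the paper omits, including the palindromic rewriting of both the equation for $u$ and the quotient, and the correct rescaling $\tau=t(q+1)^{-2/q}$ (the paper's stated substitution $t\to(q+1)t$ is at best imprecise shorthand for this). No gaps.
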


\begin{proof}
Use the above Theorem for $z=2/q,$ and the change of variable $t\rightarrow
\left( q+1\right) t.$
\end{proof}

\begin{remark}
The sequence $\{c_{q,2n/q}\}_{n}$ contains strictly the subsequence $%
\{c_{q,L}\}_{L}.$
\end{remark}

\begin{corollary}
For $q=4$, the generating function of $\{c_{4,n/2}\}_{n}$ is given for $t\in %
\left] -\sqrt{5},1\right[ $ by
\begin{equation*}
G_{4}\left( t\right) :=\sum_{n\geq 0}t^{n}c_{4,n/2}=\left( 1-\frac{1}{4}%
t^{2}-\frac{1}{8}t^{4}-\frac{1}{200}t\left( 5t^{2}+20\right) ^{\frac{3}{2}%
}\right) ^{-1/2}.
\end{equation*}
\end{corollary}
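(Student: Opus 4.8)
The plan is to specialize the preceding theorem (the generating function $G_q$ for even $q$) to $q=4$ and eliminate the auxiliary variable $u$ explicitly. Because both the numerator and the denominator of the closed form are palindromic in $u$, I would introduce the symmetric variable $s := u + u^{-1}$; then $u^2 + u^{-2} = s^2 - 2$, so the numerator $1 + (u+u^{-1}) + (u^2+u^{-2})$ becomes $s^2 + s - 1$, while setting $d := u^{-1}-u$ and using $u^{-2}-u^2 = ds$ turns the denominator into $d(1+2s)$. Thus the theorem gives
\[
G_4(t) = 2\,\frac{s^2+s-1}{d(1+2s)}, \qquad d^2 = (u^{-1}-u)^2 = s^2 - 4,
\]
and the defining relation $t = \left(5/\bigl(1+\sum_{k=1}^{2}(u^{-k}+u^k)\bigr)\right)^{1/2}$ collapses to the single quadratic constraint $s^2 + s - 1 = 5/t^2$.

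First I would use this quadratic to clear $u$ completely. Solving it gives $s = \tfrac12\bigl(-1 \pm \sqrt{5 + 20/t^2}\bigr)$, from which a direct computation yields the two quantities I need: $(1+2s)^2 = (5t^2+20)/t^2$ (the sign of $1+2s$ being irrelevant after squaring), and, using $s^2 = 5/t^2 + 1 - s$ to lower the degree, $s^2 - 4 = \bigl(10 - 5t^2 \mp |t|\sqrt{5t^2+20}\bigr)/(2t^2)$. Since the target is a $(-1/2)$-power, I would sidestep any residual ambiguity in the sign of $d$ by working with the square: from $G_4 = 10/\bigl(t^2 d(1+2s)\bigr)$ one obtains
\[
G_4(t)^{-2} = \frac{t^4 (s^2-4)(1+2s)^2}{100} = \frac{(5t^2+20)\bigl(10 - 5t^2 - t(5t^2+20)^{1/2}\bigr)}{200},
\]
and expanding the product gives exactly $1 - \tfrac14 t^2 - \tfrac18 t^4 - \tfrac1{200}\,t(5t^2+20)^{3/2}$, whence the claimed expression for $G_4(t)$.

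The step that needs genuine care is the choice of branch for $s$, which is where the two signs above are resolved. By Hermite's (Lagrange) inversion the relevant $u$ is the power series with $u(0)=0$; since $t = u(1+u+\cdots+u^4)^{-1/2}\sim u$ near the origin, one has $u\to 0^+$ and hence $s\to +\infty$ for $t>0$, but $u\to 0^-$ and hence $s\to -\infty$ for $t<0$. One must therefore take $s=\tfrac12(-1+\sqrt{5+20/t^2})$ for $t>0$ and $s=\tfrac12(-1-\sqrt{5+20/t^2})$ for $t<0$. I expect the main subtlety to be verifying that this sign flip is compensated precisely by $|t|=\mp t$, so that $s^2-4 = \bigl(10-5t^2 - t\sqrt{5t^2+20}\bigr)/(2t^2)$ holds on both sides of the origin and a single formula is valid throughout.

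Finally, to justify the domain $\left]-\sqrt5,1\right[$ I would identify it as the maximal interval on which the radicand $1 - \tfrac14 t^2 - \tfrac18 t^4 - \tfrac1{200}t(5t^2+20)^{3/2}$ is positive: substitution shows it vanishes at both $t=1$ and $t=-\sqrt5$, it equals $1$ at $t=0$, and it remains positive in between, so $G_4$ is real and finite exactly on $\left]-\sqrt5,1\right[$.
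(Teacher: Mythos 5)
Your derivation is correct and is essentially the intended one: the paper states this corollary with no proof at all, and your route --- specialize the preceding theorem to $q=4$, pass to the symmetric variables $s=u+u^{-1}$ and $d=u^{-1}-u$, use the constraint $s^{2}+s-1=5/t^{2}$ to eliminate $u$, and resolve the branch of $s$ from $u\sim t$ near the origin so that $\mp|t|$ collapses to $-t$ on both sides --- checks out algebraically, with $(5t^{2}+20)(10-5t^{2})/200=1-\tfrac14t^{2}-\tfrac18t^{4}$ confirming the final expansion. The only point left implicit is that after working with $G_{4}^{-2}$ you must still choose the positive square root, which follows from $G_{4}(0)=c_{4,0}=1$ and continuity; apart from that one-line remark, your argument is a complete proof of the statement.
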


\begin{corollary}
We have the following identities%
\begin{equation*}
\sum_{n\geq 0}\left( -5\right) ^{-n}\binom{n/2}{n}_{4}=2\text{ \ and }%
\sum_{n\geq 0}\left( -1\right) ^{n}c_{4,\frac{n}{2}}=2/\sqrt{5}.
\end{equation*}
\end{corollary}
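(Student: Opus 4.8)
The plan is to read each sum as a single value of a generating function that is already available, so that every identity reduces to one substitution plus an elementary simplification. The bridge between the two sums is the remark, immediate from Theorem~\ref{dd}, that for $L=n/2$ one has $qL=2n$ even, so the (unique) mode of $\binom{L}{k}_{4}$ lies at $k=\lfloor(2n+1)/2\rfloor=n$; consequently, and by the very definition of the extended sequence $\{c_{4,n/2}\}_{n}$,
\[
\binom{n/2}{n}_{4}=5^{\,n/2}\,c_{4,n/2}.
\]
Inserting this into the two sums rewrites the first as $\sum_{n\ge0}(-1)^{n}5^{-n/2}c_{4,n/2}=G_{4}(-1/\sqrt5)$ and the second as $\sum_{n\ge0}(-1)^{n}c_{4,n/2}=G_{4}(-1)$, where $G_{4}$ is the generating function of the preceding Corollary. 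Hence both identities follow by plugging one argument into that closed form.

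I would dispatch the second identity first, since it is the clean one. Setting $t=-1$ in $G_{4}(t)=\bigl(1-\tfrac14t^{2}-\tfrac18t^{4}-\tfrac1{200}t(5t^{2}+20)^{3/2}\bigr)^{-1/2}$, the only step that is not pure arithmetic is the collapse of the fractional power: $5t^{2}+20=25$, so $(5t^{2}+20)^{3/2}=125$ and the linear term contributes $+\tfrac{125}{200}=\tfrac58$. The bracket becomes $1-\tfrac14-\tfrac18+\tfrac58=\tfrac54$, whence $G_{4}(-1)=(5/4)^{-1/2}=2/\sqrt5$, as stated.

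For the first identity the same mechanism applies: by the change of variable relating $\sum_{n}\binom{n/2}{n}_{4}t^{n}$ to $G_{4}$ (the Theorem on $\{\binom{nz}{n}_{q}\}_{n}$ with $z=1/2$, composed with $t\mapsto\sqrt5\,t$), the sum $\sum_{n\ge0}(-5)^{-n}\binom{n/2}{n}_{4}$ is exactly $G_{4}(-1/\sqrt5)$, and I would again substitute into the closed form above. One may also run it through the parametric description, where the defining relation $t=u(1+u+u^{2}+u^{3}+u^{4})^{-1/2}$, after squaring, becomes a palindromic quartic in $u$ that I would solve by the substitution $v=u+1/u$, reducing it to a quadratic and thereby exhibiting the relevant root explicitly before simplifying.

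The delicate point—and where I would concentrate the verification—is that, in contrast to $t=-1$, the argument $t=-1/\sqrt5$ gives $5t^{2}+20=21$, whose $3/2$ power is irrational; so the reduction of $G_{4}(-1/\sqrt5)$ to the asserted value is not a mere perfect-power cancellation and must be checked carefully. In particular I would re-examine whether the weight $(-5)^{-n}$ indeed sends the evaluation to $t=-1$ (the point producing a clean rational value) rather than to $t=-1/\sqrt5$, since the two differ precisely by the factor $5^{n/2}$ hidden in $\binom{n/2}{n}_{4}=5^{n/2}c_{4,n/2}$. Beyond this branch-and-weight bookkeeping—also the choice of the root of the palindromic quartic with $u\to0$ as $t\to0$, and the sign of the square root consistent with $G_{4}(0)=1$—the argument is pure substitution.
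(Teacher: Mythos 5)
Your handling of the second identity is correct, and (the paper prints no proof of this corollary at all) it is surely the intended argument: the identification $\binom{n/2}{n}_4=5^{n/2}c_{4,n/2}$ is right, $\sum_{n\ge 0}(-1)^n c_{4,n/2}=G_4(-1)$, and at $t=-1$ the closed form of the preceding corollary gives $(5t^2+20)^{3/2}=125$, a bracket equal to $5/4$, and hence $G_4(-1)=2/\sqrt5$.

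The gap is in the first identity, at exactly the point you flag but then leave unresolved. Your reduction correctly sends $\sum_{n\ge 0}(-5)^{-n}\binom{n/2}{n}_4$ to $G_4(-1/\sqrt5)$, and carrying out the substitution you postpone gives $G_4(-1/\sqrt5)=\bigl(1-\tfrac1{20}-\tfrac1{200}+\tfrac{21\sqrt{21}}{200\sqrt5}\bigr)^{-1/2}\approx 0.9284$, not $2$. A direct check of the series confirms this: $\binom{1/2}{1}_4=\tfrac12$, $\binom{1}{2}_4=1$, $\binom{3/2}{3}_4=\tfrac{35}{16}$, $\binom{2}{4}_4=5$, $\binom{5/2}{5}_4=\tfrac{2363}{256}$, so the partial sums $1-\tfrac1{10}+\tfrac1{25}-\tfrac{35}{2000}+\tfrac1{125}-\cdots$ stabilize near $0.928$. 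No choice of root $u$ of the palindromic quartic changes this; the branch with $u\to 0$ as $t\to 0$ is forced by $G_4(0)=1$ and gives the same value. The weight that lands on the clean point $t=-1$ is $(-\sqrt5)^{-n}$, not $(-5)^{-n}$, and there the sum equals $2/\sqrt5$ -- i.e.\ it is merely the second identity restated, never $2$. So your proposal proves only half of the corollary, and the unproved half is not an unfinished computation but a statement that your own method shows cannot hold as printed (there is evidently a typo in the weight or in the claimed value). You should either prove the corrected statement $\sum_{n\ge 0}(-\sqrt5)^{-n}\binom{n/2}{n}_4=2/\sqrt5$ or report the discrepancy, rather than assert that ``both identities follow by plugging one argument into that closed form.''
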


\begin{remark}
The generating function of the sequence $\{c_{4,n}\}_{n}$is given for $t\in %
\left] -1,1\right[ $ by
\begin{equation*}
\sum_{n\geq 0}t^{n}c_{4,n}=(G_{4}\left( \sqrt{\left\vert t\right\vert }%
\right) +G_{4}\left( -\sqrt{\left\vert t\right\vert }\right) )/2.
\end{equation*}
\end{remark}

\noindent \textbf{Acknowledgement} \ The author is grateful to Pr. A.
Aknouche for useful suggestions.

Hac\`{e}ne Belbachir

USTHB/ Facult\'{e} de Math\'{e}matiques.

BP 32, El Alia, 16111 Bab Ezzouar, Algiers, Algeria.

hbelbachir@usthb.dz and hacenebelbachir@gmail.com

\end{document}